\newcommand{\R}{\mathbb{R}}
\newcommand{\Prob}{\mathbb{P}}
\renewcommand{\Pr}{\Prob}
\newcommand{\E}{\mathbb{E}}
\newcommand{\td}{\tilde}
\newtheorem{theorem}{Theorem}[section]
\newtheorem{lemma}[theorem]{Lemma}
\theoremstyle{definition}
\newtheorem{remark}[theorem]{Remark}
\newcommand{\indic}[1]{\boldsymbol{1}_{\{\ensuremath{#1}\}}}
\newcommand{\as}{a.s.}
\newcommand{\eg}{e.g.}
\newcommand{\dd}{\mathrm{d}}
\newcommand{\spm}{m}
\title{{\sc Time-Homogeneous Diffusions with a Given Marginal at a
    Random Time}\thanks{Dedicated to Marc Yor on the occasion of his
    $60^{\textrm{th}}$ birthday.}}
\begin{document}
\author{Alexander Cox\thanks{e-mail:
        \texttt{A.M.G.Cox@bath.ac.uk};
        web: \texttt{www.maths.bath.ac.uk/$\sim$mapamgc/}}\\
        Dept.\ of Mathematical Sciences\\
        University of Bath\\
        Bath BA2 7AY
 \and David Hobson\thanks{e-mail:
        \texttt{D.Hobson@warwick.ac.uk};
        web: \texttt{www.warwick.ac.uk/go/dhobson/}}\\
        Department of Statistics\\
        University of Warwick\\
        Coventry CV4 7AL
 \and Jan Ob\l \'oj\thanks{e-mail:
        \texttt{obloj@maths.ox.ac.uk}; web:
        \texttt{www.maths.ox.ac.uk/$\sim$obloj/}}\\
        Mathematical Institute \emph{and}\\
	Oxford--Man Institute of Quantitative Finance\\
	University of Oxford\\
        Oxford OX1 3LB
      }
      
\maketitle
\begin{abstract}
  We solve explicitly the following problem: for a given probability
  measure $\mu$, we specify a generalised martingale diffusion $(X_t)$
  which, stopped at an independent exponential time $T$, is
  distributed according to $\mu$. The process $(X_t)$ is specified via
  its speed measure $m$. We present three proofs. First we show how
  the result can be derived from the solution of
  \cite{BertoinLeJan:92} to the Skorokhod embedding problem. Secondly,
  we give a proof exploiting applications of Krein's spectral theory
  of strings to the study of linear diffusions. Finally, we present a
  novel direct probabilistic proof based on a coupling argument.

\end{abstract}
\section{Introduction}

We are interested in the following general problem: suppose $\mu$ is a
given distribution on $\R$, and suppose $T$ is a (possibly random)
time.  Find a time-homogeneous martingale diffusion, $(X_t)$,
independent of $T$, for which $X_T$ is distributed according to
$\mu$. In particular, when $\mu$ is regular enough, we want to specify
a function $\sigma:\R \to \R_+$ such that
\begin{equation}\label{eq:martdef}
  X_T = \int_0^T \sigma(X_s) \, \dd W_s \sim \mu,
\end{equation}
where $(W_t)$ is a Brownian motion. The process $(X_t)$ is a diffusion
on natural scale described by its speed measure $m(\dd
x)=\sigma(x)^{-2}\dd x$.  When $\mu$ is less regular, the
interpretation of $m$ as a speed measure remains valid, but $m$ may no
longer have a density. In this case $X$ becomes a generalised or gap
diffusion.

In this paper we present a solution to the problem in the case where
$T$ is distributed exponentially with parameter 1. 
Somewhat surprisingly, not only does this
problem always have a solution but also the solution is fully
explicit. This can be seen both using a probabilistic and an analytic
approach. More precisely, in Section \ref{sec:BLJ} we exploit the
general solution to the Skorokhod embedding problem of
\cite{BertoinLeJan:92}. This construction is essentially based on the
theory of additive functionals for (nice) Markov processes.  Then, in
Section \ref{sec:KW}, we present a second proof based on the theory of
generalised diffusions as presented in \cite{KotaniWatanabe:82}. This
is a more analytic approach which hinges on duality between
generalised diffusions and strings and uses
Krein's spectral theory of strings.\\
Both of the above proofs exploit deep known results. In the final
section, we present a direct proof from first principles. First we
prove that the problem has a solution. We do this by writing $X$ as a
time-changed Brownian motion, $X_t=B_{A_t}$ and hence $A_T$ is a
solution to the Skorokhod embedding in $B$: that is, $B_{A_T}$ has
distribution $\mu$. Our proof relies on a specific coupling of
time-changes $A_t$ for different processes $X$. Furthermore, the
interpretation in terms of stopping times for $B$ gives an intuitive
justification for the explicit formula for $X$ (i.e.\ $\sigma$ in
\eqref{eq:martdef}).

\subsection{Motivation}
Our original goal was to solve the problem for the case where the time
$T$ is a fixed, positive constant. Then, a time-homogeneous diffusion
with marginal $T$ is in some sense the canonical, and perhaps
simplest, process consistent with a given (single) marginal. For the
problem with fixed $T$ and under sufficient smoothness conditions
\cite{LishangJiang} have proposed a relaxation scheme for calculating
the diffusion co-efficient. However, to the best of our knowledge,
existence and uniqueness for the problem with a general measure $\mu$
remain open.  One application of this result would be to mathematical
finance: the traded prices of call options with maturity $T$ can be
used to infer the marginal distribution of the underlying asset under
the pricing measure --- note also that the price process $(S_t)$,
suitably discounted, is a martingale under this measure --- so that
the solution of the problem for fixed $T$ would give a canonical candidate
price process consistent with market data. Other time-inhomogeneous
processes exist with the correct time $T$ marginals
(cf.~\cite{Dupire:94}, \cite{CarrMadan:98}, \cite{MadanYor:02}), and
the problem of finding examples, is related to the Skorokhod embedding
problem. For further discussion of the Skorokhod embedding problem,
and the connection to finance and model-independent hedging of
derivatives, see \cite{genealogia} or \cite{HobsonSurvey}.

Applications of the problem with $T$ exponentially distributed are
discussed in a recent paper of \cite{Carr:08}. Carr proposes modelling
the stock price process as a time-homogeneous diffusion time-changed
by an independent gamma subordinator: $S_t= X_{\gamma_t}$. The clock
is normalised so that $T=\gamma_{t^*}$ has an exponential
distribution, where $t^*$ is now the maturity of options whose prices
are known, so that \cite{Carr:08} effectively considers the same
problem as the present paper. His approach is to use forward-backward
Kolmogorov equations combined with Laplace transform properties. He is
able to derive explicitly $\sigma$ in \eqref{eq:martdef}, although he
only considers $\mu$ with positive density and does not prove general
existence or uniqueness results.

\section{Generalised (gap) diffusions}
We recall the classical construction of a generalised diffusion. Let
$m_i:[0,\infty]\to [0,\infty]$ be non-decreasing and right-continuous
with $m(\infty)=\infty$ and $\ell_i=\sup\{x:m_i(x)<\infty\}>0$, $i=1,2$.
Assume further that $m_2(0+)=0$. Then $\dd m_i$ are well
defined measures and we can define a measure $m$ on $\R$ by
\begin{equation}
  \label{eq:speedmeasure_def}
  m(\dd x)=\left\{
    \begin{array}{l}
      \dd m_1(x)\quad\mbox{ for } x\in [0,\infty), \\
      \dd \check{m}_2(x)\quad \mbox{ for } x\in (-\infty,0),
    \end{array}
  \right.
\end{equation}
where $\dd \check{m}_2$ is the image of $\dd m_2$ under $x\to -x$. Naturally, $\ell_{i}$ can be defined directly from $m$ and we write $\ell_-=\ell_-(m)=-\ell_2$ and $\ell_+=\ell_+(m)=\ell_1$.\\
Consider $(B_t,\Pr^{x_0})$ a one-dimensional Brownian motion defined on
$(\Omega,\mathcal{F},(\mathcal{F}_t))$, with $B_0=x_0$, $\Pr^{x_0}$-a.s. 
We
assume $\mathcal{F}_0$ is rich enough to support random variables
independent of $B$. Let $(L^x_t)$ be the jointly continuous version of
the local time of $(B_t)$. We adopt here the classical It\^o--McKean
normalisation in which $|B_t-x|-L^x_t$ is a martingale. Put
$\Phi_t=\int_\R L^x_t m(\dd x)$ and let $(A_t)$ be the
right-continuous inverse of $(\Phi_t)$. Then
\begin{equation}
  \label{eq:gen_diff_def}
  (X_t,\Pr^{x_0}),\quad X_t:=B_{A_t},\quad \textrm{for } t\leq \zeta=
  \inf\{t\geq 0: X_t \notin (\ell_-,\ell_+)\}
\end{equation}
is a time-change of a Brownian motion and hence a strong Markov
process living on $supp(m)$.  It is called a \emph{generalised
  diffusion} (on natural scale) corresponding to the measure $m$. It
has also been called a \emph{gap diffusion} in \cite{Knight:81}. Note
that, due to our normalisation, the local time $L^x_t$ is twice the
local time in \cite{KotaniWatanabe:82}. In consequence $(X_t)$ is a
generalised diffusion corresponding to measure $2m$ in the notation of
\cite{KotaniWatanabe:82}. As an example, in this paper Brownian motion
is a diffusion with speed measure equal to the Lebesgue measure and
not twice the Lebesgue measure as in \cite{KotaniWatanabe:82}.

In order to understand better the relationship between features of $m$
and the behaviour of $X$ we discuss two important classes, firstly
where $m$ has a positive density, and secondly where $m$ only charges
points.

Suppose first that $0<m([a,b])<\infty$ for any
$\ell_-<a<b<\ell_+$. Then $(X,\Pr^x)$ is a
regular\footnote{i.e.~ $\Pr^x(H_y < \infty) >0$ for all $x,y \in I$,
  where $H_y = \inf\{t > 0 : X_t = y\}$.}  diffusion on
$I=[\ell_-,\ell_+]$ with absorbing boundary points. $X$ is
on natural scale and $m(\dd x)$ is simply its speed measure. We have
$A_t=[X]_t$ and the measure $m$ can be recovered from $X$ as
\begin{equation*}
  m(\dd x)=-\frac{1}{2}h_J''(\dd x),\quad h_J:=\E^x[\inf\{t:X_t\notin
  J\}],\ J=[a,b]\subset I,
\end{equation*}
since it can be shown that $h_J(x)$ is convex and, for $J\subset K$,
as measures $h_J''=h_K''$ on $int(J)$, see
\cite[Sec.~V.47]{RogersWilliams:00b} for a detailed discussion.  If
further $m(\dd x)=\lambda(x)\dd x$, with $\lambda$ bounded and
uniformly positive on $I$, then $(X_t)$ solves
\begin{equation}\label{eq:SDE_X}
  \dd X_t=\lambda(X_t)^{-1/2}\dd W_t,\quad t<\zeta,
\end{equation}
for a Brownian motion $(W_t)$. Equivalently the infinitesimal
generator of $X$, when acting on functions supported on $I$, is
$\mathcal{G}=\frac{1}{2\lambda(x)}\frac{\dd^2}{\dd
  x^2}=\frac{1}{2}\frac{\dd^2}{\dd m\dd x}$. Note that then
$A_t=[X]_t=\int_0^t \lambda(X_s)^{-1}\dd s$ and it can be verified
directly from the occupation time formula that $\Phi_t^{-1}=[X]_t$:
\begin{equation}\label{eq:phi_inv_A}
  \begin{split}
    \Phi_{[X]_t}&=\int_{\R}L^x_{[X]_t}m(\dd x)=\int_{\R}L^x_{[X]_t}\lambda(x)\dd x\\
    &=\int_0^{[X]_t} \lambda(B_s)\dd s=\int_0^t\lambda(B_{[X]_u})\dd
    [X]_u=\int_0^t\lambda(X_u)\dd [X]_u=t.
  \end{split}
\end{equation}
From the above discussion we see that the regions where $m$ has more
mass correspond to the regions where $(X_t)$ moves more slowly (and thus
spends more time).

By a natural extension of the above analysis, if $m$ has zero mass in
an interval, then since $(X_t)$ lives on the support of $m$, this
interval is not visited by $(X_t)$ at all. Conversely, if
$m(\{a\})>0$ (and for every neighbourhood $U$ of $a$, $m$ charges $U\setminus \{a\}$), then $a$
is a sticky point for $X$: started in $a$, $(X_u:u\leq t)$ spends a
positive time in $a$ even though it exits $a$ instantaneously.

It remains now to understand the role of isolated atoms in $m$.
Consider $m=\sum_{i=1}^N \beta_i\delta_{a_i}$ for $a_1<\ldots<a_N$, 
$\beta_i>0$ and $\beta_1=\beta_N=\infty$. Then $X$ is a continuous-time 
Markov chain living on
$\{a_1,\ldots,a_N\}$, stopped upon hitting $a_1$ or $a_N$. From the
construction it follows that $X$ can only jump to nearest neighbours,
i.e.\ from $a_i$ it can jump to $a_{i-1},a_{i+1}$, and the
probabilities of these follow instantly from the fact that $(X_t)$ is
a martingale. The time spent in $a_i$, $1<i<N$, before $(X_t)$ jumps to 
a next
point, has an exponential distribution with mean
$$\beta_i\E^{a_i}[L^{a_i}_{H_{a_{i-1},a_{i+1}}}]=2\beta_i\frac{(a_{i+1}-a_i)(a_i-a_{i-1})}{a_{i+1}-a_{i-1}},$$
where $H_{a,b}$ is the first hitting time of $\{a,b\}$ for $(B_t)$.
(This is an example of the more general formula (for $a < x\wedge y\leq x\vee y < b$)
\begin{equation}\label{eq:Elocaltime} \E^x[ L^y_{H_a \wedge H_b} ] = 2
  \frac{ ( x \wedge y - a )(b-x 
    \vee y)}{b-a} \end{equation}
for expected values of Brownian local times.) This completes the description of $X$. We see that an isolated atom in $a_i$ has an effect of introducing a holding time in that point for $X$, with mean proportional to $m(\{a_i\})$.

\section{Main result and probabilistic proof}\label{sec:BLJ}
Having recalled generalised diffusions we can now state the main
result of our paper. In this section we provide a proof rooted in
probabilistic arguments while in the next section we describe a more
analytical approach. For a probability measure $\mu$ we let $\ell_-^\mu$ and $\ell_+^\mu$ denote respectively the lower and the upper bounds of its support.
\begin{theorem}\label{thm:main}
  Let $\mu$ be a probability measure, $\int |x|\mu(\dd x)<\infty$, $\int x\mu(\dd x)=x_0$ and let $u_\mu(x)=\int_{\R}
  |x-y|\mu(\dd y)$. Define a measure $m$ by 
  \begin{equation}\label{eq:m_measure_def}
\begin{split}
    &m(\dd x) = \frac{\mu(\dd x)}{u_\mu(x)-|x-x_0|} \qquad\qquad\ \textrm{ for }x\in (\ell_-^\mu,\ell_+^\mu),\\
    &m([y,x_0))=m([x_0,x])=\infty\qquad \qquad  \textrm{ for } y\leq \ell_-^\mu\leq \ell_+^\mu\leq x\ .
\end{split}
\end{equation}
  Let $(X_t)$ be the generalised diffusion associated with $m$ and $T$
  be an $\mathcal{F}_0$--measurable $\Pr^{x_0}$--exponential random
  variable independent of $(X_t)$. Then, under $\Pr^{x_0}$,
  $X_T\sim\mu$ and $(X_{t\land T})$ is a uniformly integrable
  martingale.
\end{theorem}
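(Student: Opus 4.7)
The plan is to recognise the time $A_T$ as the Skorokhod embedding stopping time for $\mu$ into Brownian motion constructed by Bertoin and Le Jan \cite{BertoinLeJan:92}, from which both conclusions will follow.

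By construction of the generalised diffusion, $X_t=B_{A_t}$ where $A_t$ is the right-continuous inverse of $\Phi_s=\int_{\R}L^x_s\, m(\dd x)$, so
\[
A_T=\inf\Bigl\{s\geq 0\colon \int_{\R}L^x_s\, m(\dd x)>T\Bigr\}.
\]
Since $T$ is $\F_0$-measurable, exponential with rate $1$ and independent of $B$, $A_T$ is a stopping time of $B$ in the filtration enlarged by $T$, and $X_T=B_{A_T}$. It thus suffices to show that $A_T$ embeds $\mu$ in $B$ in a uniformly integrable fashion.

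Stopping times of the form $T_m=\inf\{s:\int L^x_s\, m(\dd x)>\tau\}$ with $\tau\sim\mathrm{Exp}(1)$ independent of $B$ are precisely those analysed in \cite{BertoinLeJan:92}. Using $\Pr(T_m>s\mid\F_s)=\exp(-\Phi_s)$ and the occupation times formula one computes
\[
\Pr(B_{T_m}\in\dd x)=\E[L^x_{T_m}]\,m(\dd x),
\]
while, under Brownian uniform integrability, Tanaka's formula at $T_m$ yields $\E[L^x_{T_m}]=u_\mu(x)-|x-x_0|$ in our It\^o--McKean normalisation. Substituting the explicit $m$ from \eqref{eq:m_measure_def} into the first identity gives
\[
\Pr(B_{T_m}\in\dd x)=(u_\mu(x)-|x-x_0|)\cdot\frac{\mu(\dd x)}{u_\mu(x)-|x-x_0|}=\mu(\dd x)
\]
on $(\ell_-^\mu,\ell_+^\mu)$, while the infinite mass assigned to $m$ just outside $\supp\mu$ prevents $B_{T_m}$ from leaving that support. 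The rigorous justification of these identities, together with the required uniform integrability of $(B_{s\land T_m})$, is essentially the content of \cite{BertoinLeJan:92}.

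Having obtained $X_T=B_{A_T}\sim\mu$ and the uniform integrability of $(B_{s\land A_T})$, the martingale claim follows from optional stopping applied to the bounded $(\F_s\vee\sigma(T))$--stopping times $A_{t\land T}\leq A_T$:
\[
X_{t\land T}=B_{A_{t\land T}}=\E\bigl[B_{A_T}\mid\F_{A_{t\land T}}\bigr]=\E\bigl[X_T\mid\F_{A_{t\land T}}\bigr],
\]
so that $(X_{t\land T})$ is a uniformly integrable martingale with terminal value $X_T$. I expect the principal obstacle to be the careful matching of our $m$ with the measure appearing in \cite{BertoinLeJan:92}, in particular tracking the factor of two between the It\^o--McKean and the occupation-density normalisations of local time, and treating properly the boundary behaviour of $m$ relative to $\supp\mu$ (especially the degenerate cases such as $\mu=\delta_{x_0}$ and $\mu$ with atoms at the endpoints of its support).
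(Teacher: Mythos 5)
Your high-level plan --- recognise $A_T$ as a Bertoin--Le Jan embedding time --- is the same route the paper takes in Section~\ref{sec:BLJ}, but two steps that you wave at are precisely where the work lies, and as written the argument has a genuine gap. First, the stopping times treated in \cite{BertoinLeJan:92} for a recurrent Markov process are of the form $\inf\{t:\int_0^t(1-\hat{V}_\mu(Y_s))^{-1}\,\dd F_s>L^0_t(Y)\}$, i.e.\ an additive functional is compared with the process's \emph{own} local time at its starting point --- not with an independent exponential clock. So it is not true that times of the form $\inf\{s:\Phi_s>\tau\}$ with $\tau\sim\mathrm{Exp}(1)$ independent of $B$ ``are precisely those analysed'' there. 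The paper's essential device is to build an auxiliary recurrent process $Y$ with an isolated holding point $\star$ (exponential holding time, jump to $0$, run Brownian motion until exiting $(-N,N)$, jump back), so that the local time of $Y$ at $\star$ \emph{is} an independent exponential variable; one then computes the invariant measure, the Revuz correspondence, $V_\chi$ and $\hat V_\mu$ (checking the hypothesis $\hat V_\mu\le 1$, which follows from $u_\mu(a)\ge|a|$), and shows the resulting $T_{BLJ}$ rewrites exactly as $\rho_\star+A_{L^\star_{\rho_\star}(Y)}$. Without this augmentation your appeal to \cite{BertoinLeJan:92} does not attach to the stopping time you have defined.

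Second, your direct computation is circular. The identity $\Pr(B_{T_m}\in\dd x)=\E[L^x_{T_m}]\,m(\dd x)$ can indeed be derived, but Tanaka gives $\E[L^x_{T_m}]=u_\nu(x)-|x-x_0|$ where $\nu$ is the (unknown) law of $B_{T_m}$, not $u_\mu$; replacing $\nu$ by $\mu$ assumes the conclusion. What you actually obtain is the fixed-point equation $\nu(\dd x)=(u_\nu(x)-|x-x_0|)\,\mu(\dd x)/(u_\mu(x)-|x-x_0|)$, and proving that $\nu=\mu$ is its unique solution is exactly the content one must import from the Bertoin--Le Jan theorem (or prove by the coupling argument of Section~\ref{sec:our}); the paper is explicit that this computation is only an a posteriori heuristic for the formula \eqref{eq:m_measure_def}. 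Finally, you do not address unbounded support: the paper first proves the result for compactly supported $\mu$ and then passes to the limit via the truncated measures $\mu_M$ with matching potentials, using $\Pr(T\ge\tau_M)\to 0$ for the law of $X_T$ and Proposition~18 of \cite{Cox:08} for uniform integrability; some such limiting argument is needed, since the auxiliary construction requires a bounded interval. Your concluding optional-stopping step for the martingale property is fine once uniform integrability of $(B_{s\wedge A_T})$ is actually established.
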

\begin{proof}
  It suffices to prove the theorem for $x_0=0$ as the general case
  follows by a simple shift in space.  Assume in first instance that
  $supp(\mu)\subset (-N,N)$. Consider the following process $(Y_t)$:
  it takes values in $(-N,N)\cup \{\star\}$, with $\star$ added as an
  isolated point. $Y$ starts in $\star$ which is a holding point with
  parameter $1$. On exit from $\{\star\}$ at time $\rho_\star$, the
  process behaves as $B$ under the measure $\Pr^{0}$, so that
  $Y_{\rho_\star+t}=B_t$, until exit from $(-N,N)$ when $Y$ jumps back
  to $\star$. In this way $(Y_t)$ is a recurrent strong Markov process
  with $\star$ as its regular starting point. Write $\tilde{\Pr}^x$
  for the probability measure associated with the process $Y_t$
  started at $x$, noting that for all $\tilde{\Pr}^x$, the path jumps
  from $\star$ to $0$. We make explicit the \cite{BertoinLeJan:92}
  solution to the Skorokhod embedding problem
  of $\mu$ in $Y$.\\
  Let $\tau_\star=\inf\{t>\rho_\star: Y_t=\star\}
  =\rho_\star+\inf\{t>0: B_t\notin (-N,N)\}=:\rho_\star+H$. The
  process $(Y_t)$ admits a family of local times $(L^a_t(Y))$. We
  simply have $L^a_t(Y)=L^a_{t-\rho_\star}$, $|a|<N$ and
  $L^\star_t(Y)=L^\star_{\rho_\star}(Y)$ for $\rho_\star\leq
  t<\tau_\star$. This last quantity is exponentially distributed and
  independent of $(B_t)$. It follows from \eqref{eq:Elocaltime} that
  \begin{equation}
    \begin{split}
      \tilde{\E}^{\star}[L^a_{\tau_\star}(Y)] & =
      \E^{0}[L^a_H]=N-|a|,\quad
      |a|<N,\\
      \tilde{\E}^x[L^a_{\tau_\star}(Y)] & = \E^{x}[L^a_H] =\frac{(a
        \wedge x+N)(N - a \vee x)}{N}, 
      \ |a|<N,\ |x|<N\ .
    \end{split}
  \end{equation}
  The invariant measure $\nu$ for $Y$, displayed in $(1)$ in
  \cite{BertoinLeJan:92}, acts by
  \begin{equation}
    \begin{split}
      \int f \dd \nu &=
      \tilde{\E}^{\star}\left[\int_{\rho_\star}^{\tau_\star}f(Y_s)\dd
        s\right]+f(\star)=\E^{0}\left[\int_0^H f(B_u)\dd
        u\right]+f(\star)\\
      &=\int_{-N}^{N} f(a)\E^{0}[L^a_H]\dd a+f(\star)=\int_{-N}^{N}
      f(a)\left(N-|a|\right)\dd a+f(\star)\ .
    \end{split}
  \end{equation}
  Consider a finite positive measure $\xi$ on $(-N,N)$ and a positive
  continuous additive functional $F_t=\int L^a_t(Y)\xi(\dd a)$. The
  Revuz measure $\chi$ of $F$ is then given by
  \[
  \int f \dd \chi = \frac{1}{t}\tilde{\E}^{\nu}\left[\int_0^t
    f(Y_s)\dd F_s\right] = \frac{1}{t}\int f(a)
  \tilde{\E}^{\nu}[L^a_t(Y)]\xi (\dd a)=\int f(a)(N-|a|)\xi(\dd a)
  \]
  so that $\xi(da)=\chi(da)/(N- |a|)$ and $\chi=\mu$ iff $\xi(\dd
  a)=\frac{\mu(\dd a)}{N-|a|}$. We proceed to compute $V_\chi$ and
  $\hat{V}_\mu$, as defined in \cite{BertoinLeJan:92}. We have 
$V_\chi(x)=\tilde{\E}^x [ \int_0^{\tau_\star}\dd F_s ]$ and 
$\int \hat{V}_\mu \dd \chi = \int V_\chi \dd \mu$. Then,
for $x \in (-N,N)$, 
  \begin{equation}
    \begin{split}
      V_\chi(x)&=
      \int
      \E^x[L^a_H]\frac{\chi(\dd a)}{N-|a|}\\
      &=\int\frac{(N+ a \wedge x)(N - a \vee x)}{N ( N-|a|) }\chi(\dd
      a),
    \end{split}
  \end{equation}
and it follows that
  \begin{equation} \label{eq:Vhatmu}
    \hat{V}_\mu(a)=\int \frac{(N+ a 
      \wedge x)(N - a \vee x)}{N ( N-|a|) } \mu(\dd 
    x)=\frac{N-u_\mu(a)}{N-|a|}\ .
  \end{equation}
  Jensen's inequality grants us $u_\mu(a)\geq |a|$ and hence
  $\hat{V}_\mu\leq 1$ is bounded as required.
  Furthermore, from \eqref{eq:Vhatmu}, for $0<a<N$
  \[
  \hat{V}_{\mu}(a) \geq \frac{\int_{a}^N u_{\mu}'(x) dx}{\int_{a}^N
    dx} \geq u_{\mu}'(a+) = 1 - 2 \mu((a,\infty)) \stackrel{a \uparrow
    N}{\rightarrow} 1
  \]
  since $\mu$ has support in $(-N,N)$. Hence
  the bound $\hat{V}_{\mu}(a) \leq 1$ is best possible.

  We have $$(1-\hat{V}_\mu(a))^{-1}=\frac{N-|a|}{u_\mu(a)-|a|}$$ and
  the \cite{BertoinLeJan:92} stopping time is given by
  \begin{equation} \label{eq:BLJ_st1}
    \begin{split}
      T_{BLJ}& =\inf\left\{t\geq 0: \int_0^t
        \frac{N-|Y_s|}{u_\mu(Y_s)-|Y_s|}\dd F_s >
        L^\star_t(Y)\right\}\ ,
    \end{split}
  \end{equation}
  where $F_t=\int L^a_t(Y)\frac{\mu(\dd a)}{N-|a|}$. The key result of
  \cite{BertoinLeJan:92} is that $T_{BLJ}$ solves the Skorokhod
  embedding problem for $\mu$: i.e. $Y_{T_{BLJ}}\sim \mu$.  Recall
  that $X_t=B_{A_t}$ with $A_t$ the right--continuous inverse of
  $\Phi_t=\int L^a_t m(\dd a)$, where $m$ is as displayed in
  \eqref{eq:m_measure_def}.  By the Corollary in
  \cite[p.~540]{BertoinLeJan:92}, $\E^{0}[L^\star_{T_{BLJ}}(Y)]=1$ from
  which it follows that $T_{BLJ}< \tau_\star$. This allows us to
  rewrite $T_{BLJ}$ as
  \begin{equation}
    \begin{split}
      T_{BLJ}& = \rho_\star+ \inf\left\{t\geq 0: \int_0^t \frac{N-|B_s|}{u_\mu(B_s)-|B_s|}\dd F_{s-\rho_\star} > L^\star_{\rho_\star}(Y)\right\}\\
      & = \rho_\star+ \inf\left\{t\geq 0: \int \frac{L^a_t\mu(\dd a)}{u_\mu(a)-|a|} > L^\star_{\rho_\star}(Y)\right\}\\
      & = \rho_\star+ \inf\left\{t\geq 0: \Phi_t >
        L^\star_{\rho_\star}(Y)\right\}=\rho_\star+A_{L^\star_{\rho_\star}(Y)}\
      .
    \end{split}
  \end{equation}
  Hence $B_{A_T}=X_T\sim \mu$, with $T={L^\star_{\rho_\star}(Y)}$ an
  exponential random variable, as required. Note that by construction
  $(X_t)$ remains within the bounds of the support of $\mu$. In
  particular, uniform integrability of $(X_{t\land T})$ follows from
  the fact that it is a bounded martingale.\\
  Now consider an arbitrary $\mu$ and $m$ defined via
  \eqref{eq:m_measure_def}. Note that $\ell_-^\mu=\ell_-(m)$ and
  $\ell_+^\mu=\ell_+(m)$. Naturally if $\mu$ has a bounded support
  then the previous reasoning applies, so suppose that $-\ell_-^\mu=\ell_+^\mu=\infty$. For $M > |u_\mu(0)|$, let $\mu_M$ be the measure on
  $[q^-_M,q^+_M]\cup\{-M,M\}$, centred in zero and with
  $u_{\mu_M}=u_{\mu}$ on $[q^-_M,q^+_M]$, and $u_{\mu_M}\le
  u_{\mu}$. Note that this defines $q_M^\pm$ and $\mu_M$ uniquely,
  $q_M^\pm$ converge to the bounds of the support of $\mu$ as
  $M\to\infty$, $\mu_M=\mu$ on $(q^-_M,q^+_M)$, 
$\mu(\{q^\pm_M\}) \ge \mu_M(\{q^\pm_M\})$ 
and $\mu_M$ converges weakly 
to $\mu$, see
  \cite{Chacon:77} for details. Let $A^M_t$ be the inverse of
  $\Phi^M_t=\int L^x_t m_M(\dd x)$, with $m_M$ given by
  \eqref{eq:m_measure_def} for $\mu_M$, and $X^M_t=B_{A^M_t}$. Fix an
  exponential random variable $T$ independent of $(B_t)$. By the
  construction above (applied with $N=M+1$, so that the support of
  $\mu_M$ is contained in $(-N,N)$), $X^M_T\sim \mu_M$. Observe that,
  since $m_M=m$ on $(q^-_M,q^+_M)$, we have $X^M_t=X_t$ for $t <
  \tau_M:=\inf\{t: X_t\notin (q^-_M,q^+_M)\}$. Since $\Pr(T \ge
  \tau_M) \to 0$, both $X^M_{T}$ and $X_{T}\indic{T < \tau_M}$
  converge to the same limit in distribution as $M \to \infty$, and
  hence $X_T \sim \mu$.\\
  To see that the process $X_{t \land T}$ is uniformly integrable, we
  note that this is equivalent to the process $B_{A_{t \land T}}$
  being uniformly integrable. This is easy to see as a consequence of
  Proposition~18 of \cite{Cox:08}, noting that since our laws $\mu_n,
  \mu$ are centered, uniform integrability and minimality are
  equivalent.\\
  Finally, when only one of $\ell_-^\mu,\ell_+^\mu$ is infinite, say $\ell_+^\mu=\infty$, the procedure is analogous but we only truncate the support of $\mu$ on one side, i.e.\ we look at $\mu_M=\mu$ on $(q^-_M,\infty)$.

\end{proof}

\section{An analytic proof}\label{sec:KW}
In the previous section, we proved Theorem \ref{thm:main} using a
probabilistic approach, characteristic of the work of
\cite{BertoinLeJan:92}).  However, study of generalised diffusions can
be seen as a probabilistic counterpart of the theory of strings, see
\cite{DymMcKean:76}. Indeed the theory of strings and original results
in \cite{Krein:52} have played an important r\^ole in the study of
fine properties of generalised diffusions including the L\'evy
measures of their inverse local times\footnote{Essentially Krein's
  theorem provides a bijection between the set of strings and the set
  of their spectral measures. This is equivalent with a bijection
  between generalised diffusions $(X_t)$ with $m_2\equiv 0$, reflected
  in zero, with the set of subordinators given by the inverse of the
  local time in zero of $X$.}, lower bounds on the spectrum of their
infinitesimal generator, asymptotics of their transition densities and
first hitting times distributions see
\cite{KacKrein:58,Knight:81,KotaniWatanabe:82,KuchlerSalminen:89}. It
would be thus natural to re-derive \eqref{eq:m_measure_def} using
analytic methods. This is indeed possible and we sketch here the main
steps of such derivation.

Let $(X_t)$ be a generalised diffusion associated with a measure $m$
as in \eqref{eq:speedmeasure_def}. Recall that in the notation of
\cite{KotaniWatanabe:82} $X$ is associated to measure $2m$. Let 
$\phi,\psi,h_{\pm}(\lambda),h(\lambda)$ and $u_{\pm}$ be defined as in
(3.1)--(3.4) in \cite{KotaniWatanabe:82} (but with our normalisation of 
$m$):
\begin{align*}
  \phi(x,\lambda) & =
  \begin{cases}
    1 + 2 \lambda \int_{0-}^{x+}(x-y)
    \phi(y,\lambda) m(\dd y) &: 0\leq x<\ell_+ \\
    1 + 2 \lambda \int_{x-}^{0-}(y-x) \phi(y,\lambda) m(\dd y) &: \ell_-<x < 0
  \end{cases}\\
  \psi(x,\lambda) & =
  \begin{cases}
    x + 2 \lambda \int_{0-}^{x+}(x-y) \psi(y,\lambda) m(\dd y) &: 0\leq x<\ell_+ \\
    x + 2 \lambda \int_{x-}^{0-}(y-x) \psi(y,\lambda) m(\dd y) &: \ell_-<x < 0
  \end{cases}\\
  h_+(\lambda) & = \int_0^{\ell_+} \frac{1}{\phi(x,\lambda)^2} \, \dd
  x = \lim_{x \uparrow \ell_+} \frac{\psi(x,\lambda)}{\phi(x,\lambda)}
  \\
  h_-(\lambda) & = \int_{\ell_-}^0 \frac{1}{\phi(x,\lambda)^2} \, \dd
  x = -\lim_{x \downarrow \ell_-}
  \frac{\psi(x,\lambda)}{\phi(x,\lambda)}
  \\
  \frac{1}{h(\lambda)} & = \frac{1}{h_+(\lambda)} +
  \frac{1}{h_-(\lambda)} \\
  u_{\pm}(x,\lambda) & = \phi(x,\lambda) \mp
  \frac{\psi(x,\lambda)}{h_{\pm}(\lambda)} .
\end{align*} 
These functions yield a direct representation (3.5) therein:
\begin{equation}\label{eq:geq}
  g_\lambda(x,y)=g_{\lambda}(y,x)=h(\lambda)u_+(x,\lambda)u_-(y,\lambda),\quad x\geq y,
\end{equation}
of the resolvent density $g_\lambda$, defined by
\begin{equation}
  \E^x\left[\int_0^\zeta \mathrm{e}^{-\lambda t} f(X_t)\dd t\right]=\int_{(\ell_-,\ell_+)}
  2g_\lambda(x,y)f(y)m(\dd y), \quad x\in (-\ell_-,\ell_+)\label{eq:gdef}
\end{equation}
for continuous bounded functions $f$ on the support of $m$.\\
In what follows we take $\lambda=1$ and drop the $\lambda$
argument. We have $u_\pm(0)=1$ and it can be checked independently 
(or deduced from \eqref{eq:geq}--\eqref{eq:gdef} above) that 
$u_\pm$ are non-negative with $u_+$ non-increasing, and $u_-$ 
non-decreasing. Further we have $u_+(x)\to 0$ as $x\to 
\ell_+$ and $u_-(x)\to 0$ as $x\to \ell_-$. This is described in detail, in the case of standard diffusion processes, in Theorem 5.13.3 in \cite{Ito:06} (note that our $u_+$ is the solution $\underline{u}$ therein), see also \cite[p.~241]{KotaniWatanabe:82} and \cite[p.~57]{Knight:81}. Furthermore, from their definitions, we have
\begin{equation}\label{eq:for_upm}
\begin{split}
  u_+''(\dd x)&= 2u_+(x)m(\dd x),\qquad 0<x<\ell_+,\\ u_-''(\dd x)&= 2u_-(x)m(\dd x),\qquad \ell_-<x<0\ .
\end{split}
\end{equation}
Assume for simplicity that $\mu$ is a centred probability measure:
$x_0=0$, and put $U_\mu(x)=u_\mu(x)-|x|$.  Then, Theorem
\ref{thm:main} is simply equivalent to showing that for $\lambda=1$
and $m$ given by \eqref{eq:m_measure_def}, that is $m(\dd
x)=\frac{\mu(\dd x)}{U_\mu(x)}$, we have $g_1(x,0)=\frac{1}{2}U_\mu(x)$.\\
Observe now that $U_\mu(x)$ also solves \eqref{eq:for_upm} and $U_\mu(x)\to 0$ as $|x|\to\infty$. Given that
$g_\lambda$ solves \eqref{eq:geq} it follows that
$g(x)=g_1(0,x)=cU_\mu(x)$ for some constant $c$. It remains to show
that $c=\frac{1}{2}$. For this we analyse the derivative in zero. On
the one hand we have
$$U_\mu'(0-)-U'_\mu(0+)=2\mu([0,\infty))+2 \mu((-\infty,0))=2$$
and on the other hand
$$g'(0-)-g'(0+)= 
h(\lambda)\left(u_-'(0-)-u_+'(0+)\right)=h(\lambda)\left(\frac{1}{h_-(\lambda)}+\frac{1}{h_+(\lambda)}\right)=1,$$
from which it follows that $c=\frac{1}{2}$ as required.

To end this section we check that the boundary behaviour of $X$ at $\{\ell_-,\ell_+\}$ is what we would expect it to be. Consider for example $\ell_+$. 
If $\mu(\{\ell_+\})=0$ then we have 
\begin{equation}
\begin{split}
m((0,\ell_+))&=\int_{(0,\ell_+)}\frac{\mu(\dd x)}{\int_x^{\ell_+} (y-x)\mu(\dd y)}\geq \int_{\ell_+-\epsilon}^{\ell_+}\frac{\mu(\dd x)}{\int_x^{\ell_+} (y-x)\mu(\dd y)}\\
&\geq \frac{1}{\epsilon} \int_{\ell_+-\epsilon}^{\ell_+}\frac{\mu(\dd x)}{\mu([x,\ell_+])}=\frac{1}{\epsilon}\left(\log \mu([\ell_+-\epsilon,\ell_+])-\log \mu(\{\ell_+\})\right)=\infty.
\end{split}
\end{equation}
In a similar manner we have
\begin{equation}
\begin{split}
\sigma_m:&=\int_{(0,\ell_+)}m((0,x])\dd x = \int_0^{\ell_+}\int_0^x \frac{\mu(\dd y)\dd x}{\int_y^{\ell_+} (u-y)\mu(\dd u)}=\int_0^{\ell_+}\frac{(\ell_+-y)\mu(\dd y)}{\int_y^{\ell_+} (u-y)\mu(\dd u)}\\
&\geq \int_0^{\ell_+}\frac{(\ell_+-y)\mu(\dd y)}{\int_y^{\ell_+} (\ell_+-y)\mu(\dd u)}=\int_0^{\ell_+}\frac{\mu(\dd y)}{\mu([y,\ell_+])}=\infty.
\end{split}
\end{equation}
Hence, by definition, $\ell_+$ is a natural boundary for $(X_t)$ which can not be reached in finite time starting from a point $x<\ell_+$, see \cite[Sections 5.11 and 5.16]{Ito:06}.

Suppose $\mu(\{\ell_+\})>0$. First note that $\ell_+$ is a trap since $m(\{\ell_+\})=\infty$ by the definition of $m$ in \eqref{eq:m_measure_def}, and hence $(X_t)$ is absorbed in $\ell_+$ upon reaching it. Further we have
\begin{equation}
\begin{split}
\sigma_m&=\int_{(0,\ell_+)}\frac{(\ell_+-y)\mu(\dd y)}{\int_y^{\ell_+} (u-y)\mu(\dd u)}\leq \int_{(0,\ell_+)}\frac{(\ell_+-y)\mu(\dd y)}{\int_{\frac{y+\ell_+}{2}}^{\ell_+} (u-y)\mu(\dd u)}\\
& \leq 2\int_{(0,\ell_+)}\frac{\mu(\dd y)}{\mu([\frac{y+\ell_+}{2},\ell_+])}<\infty.
\end{split}
\end{equation}
It follows that if $\ell_+$ is the endpoint of a regular interval for $(X_t)$, i.e.\ if $m((\ell_+-\epsilon,\ell_+))>0$ for all $\epsilon>0$ then $\ell_+$ is a regular or exit boundary and hence the process $(X_t)$ can reach $\ell_+$ in finite time. 
Finally, if $\ell_+$ is an isolated point in the support of $\mu$, i.e.\ if $\mu((\ell_+-\epsilon,\ell_+])=\mu(\{\ell_+\})$ for some $\epsilon>0$, then likewise it is an isolated absorbing point in the state space of $(X_t)$. It is easy to see that it can be reached in finite time with positive probability by considering $(X_t)$ reaching the point $\sup\{x\in supp(\mu): x<\ell_+\}$ and behaving thereafter.

\section{A more intuitive and direct proof}\label{sec:our}
We have so far presented two methods of arriving at the representation
\eqref{eq:m_measure_def} and Theorem~\ref{thm:main}. Both relied on
deep probabilistic or analytic results and neither method appears to
give a strong insight into why the result might be true. Consequently,
one might want to have a more bare-handed proof, particularly if one
wishes to generalise the result to other settings. Our goal in this
section, is to give a direct proof of Theorem \ref{thm:main}, using a
coupling and a construction of a martingale diffusion as a time-change
of Brownian motion. 
The intuitive picture on which we base our proofs exploits the fact that
we can write a time-changed martingale
diffusion as a Brownian motion. In this picture, `locally', the
process would appear to stop according to an exponential random
variable, whose parameter would depend on the speed of the diffusion
at that location; generalising this idea, we propose modelling the
choice of an exponential stopping time by a Poisson Random measure on
$\R_+ \times \R$, where points are placed with intensity $\dd u \,
m(\dd x)$, i.e.\ with more frequency in places where we expect to stop
more often. 
Then we stop the process at a point $x$, if there is a point at
$(u,x)$ in the Poisson random measure, and if the local time of the
Brownian motion at $x$ reaches $u$, before the local time at any other
$x'$ reaches $u'>0$ for some other point $(u',x')$ of the Poisson
random measure.  By comparing these stopping times $T^m$ derived from
different Poisson random measures, we are able to prove a monotonicity
result. This gives us a coupling argument from which we deduce the
existence of a measure $m$ with the desired stopping distribution,
i.e.\ $B_{T^m}\sim \mu$. Some simple calculations show that
construction of a suitable generalised diffusion follows.  This new
insight then allows us to give an intuitive justification of the
explicit formula \eqref{eq:m_measure_def}.  Observe that effectively
we re-interpret the original problem as a new problem of finding a
solution to the Skorokhod embedding problem (cf.~\cite{genealogia}) in
a given class of stopping times $T^m$.

We fix the underlying Brownian motion $(B_t)$ and the stopping times
will be based on its local times $L^x_t$.  We think about the
behaviour of the process in the context of the curve of the local
times $L^x_t$ of $B_t$ as $t$ is increasing. More specifically, define
\[
R_t = \{(u,x) : L_t^x > u\}
\]
which is the set of points `inside' the local time curve. Now, given a
measure $\spm(\dd x)$, we suppose $\Delta^\spm$ is a Poisson Random
Measure on $\R_+ \times \R$ with intensity measure $\dd u\,\spm(\dd
x)$, independent of the Brownian motion $B$.  We allow $\spm(\dd x)$
to be infinite on some intervals. More precisely we assume that there
exists a, possibly infinite, interval $I$, containing the origin, such
that $\spm(\Gamma)<\infty$ for any compact set $\Gamma\subset I$ and
that $\spm_{|I^c}\equiv \infty$. This agrees with
\eqref{eq:speedmeasure_def}.  Formally the measure $\Delta^\spm$
decomposes into $\Delta^\spm=\td{\Delta}+\Delta_\infty$ where
$\td{\Delta}$ is a Poisson Random measure with intensity $\dd u\,
\spm_{|I}(\dd x)$ and $\Delta_\infty=\sum_{x\notin
  I}\delta_{(0,x)}$. We adopt this convention from now on.

We define the stopping time
\begin{equation} \label{eqn:Tdefn} T^{\spm} = \inf\{t \ge 0 :
  \Delta^\spm(R_t) \geq 1\}.
\end{equation}
Figure~\ref{fig:basic} shows a graphical representation of the
construction.  \psfrag{L}{$L_t^x$} \psfrag{x}{$x$} \psfrag{F1}{$F_1$}
\psfrag{F2}{$F_2$}
\begin{figure}[t]
  \begin{center}
    \includegraphics[width=3in,height=3in]{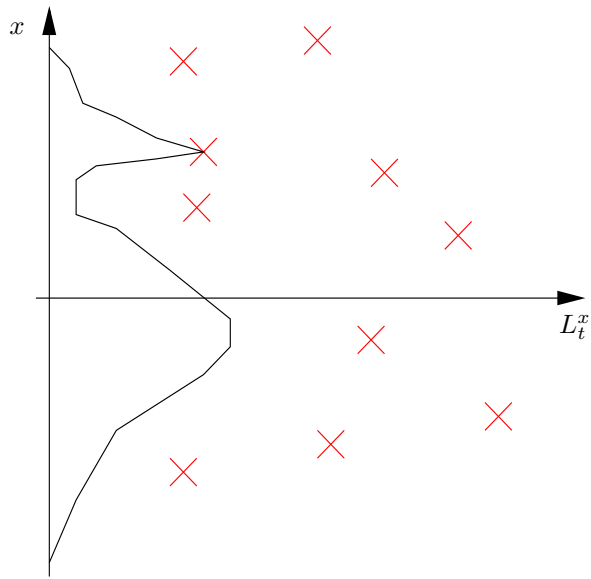}
    \caption{\label{fig:basic} The curve on the left represents the
      local time $L_t^x$ at time $t$. As time increases, the curve
      moves outwards. The crosses are distributed according to
      $\Delta$ and $T$ occurs the first time the local time curve
      hits such a point.} \end{center}
\end{figure}
The idea now is to construct $\spm$ such that $B_{T^\spm}\sim \mu$ and
then time change with $A^m_t$ to obtain the desired generalised diffusion process as in \eqref{eq:gen_diff_def}. This is
explained in the following theorem.

\begin{theorem}\label{thm:main2}
  Given a centered probability distribution $\mu$ on $\R$ there exists
  a measure $\spm$ such that $T^\spm$ is minimal\footnote{Following
    \citet{monroe_embedding_1972}, a stopping time $T$ is minimal if
    $S \le T$ \as{} and $B_S \sim B_T$ imply $S=T$ \as. Then if $B_T$
    is a centred random variable, and $B_0 = 0$, minimality of $T$ is
    equivalent to the uniform integrability of $B_{t \wedge T}$. } and
  embeds $\mu$:
  $B_{T^\spm}\sim \mu$.\\
  Furthermore, if $(X^m_t)$ is a generalised diffusion given via
  \eqref{eq:gen_diff_def} then the stopping time
  \begin{equation}\label{eq:Tsimdef}
    \td{T}^\spm = A^{-1}_{T^\spm}
  \end{equation}
  is exponentially distributed, independently of $X$, and
  $X_{\td{T}^\spm}=B_{T^\spm}\sim \mu$.
\end{theorem}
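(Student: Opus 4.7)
The starting point is a simple conditional distribution computation: given the Brownian path $B$, the count $\Delta^\spm(R_t)$ is Poisson with mean $\Phi_t = \int L^x_t\, \spm(\dd x)$, so
\[
\Pr(T^\spm > t \mid B) = e^{-\Phi_t}.
\]
Consequently $E := \Phi_{T^\spm}$ is Exp(1)-distributed and independent of $B$. Since $A^\spm$ is defined as the right-continuous inverse of $\Phi$, this immediately gives $T^\spm = A^\spm_E$, hence $\td{T}^\spm = (A^\spm)^{-1}(T^\spm) = E$, while $X^\spm_{\td{T}^\spm} = B_{A^\spm_E} = B_{T^\spm}$. As $X^\spm$ is a measurable functional of $B$ alone and $E$ is a function of the independent PRM, $E$ is independent of $X^\spm$. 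This disposes of the second half of the theorem and reduces the first half to producing a measure $\spm$ with $B_{T^\spm}\sim\mu$ together with minimality of $T^\spm$.

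For existence of $\spm$, the central tool is a coupling: whenever $\spm_1 \le \spm_2$, the PRMs may be realised on a common probability space with $\Delta^{\spm_1}\subset \Delta^{\spm_2}$ by thinning, giving $T^{\spm_1}\ge T^{\spm_2}$ a.s. Combining this monotonicity with the occupation-density identity
\[
\E[f(B_{T^\spm})] = \int_\R f(x)\, g_\spm(x)\, \spm(\dd x),\qquad g_\spm(x):=\E\!\left[\int_0^\infty e^{-\Phi_t}\,\dd L^x_t\right],
\]
allows the calibration of $\spm$ against $\mu$. I would first treat $\mu$ with finitely many atoms, where $X^\spm$ is a birth--death chain on $\supp(\mu)$ with exponential holding times whose parameters are given by \eqref{eq:Elocaltime}; matching these parameters to $\mu$ pins down an atomic $\spm$ uniquely, and $B_{T^\spm}\sim \mu$ is verified by an elementary Markov-chain computation. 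General centred $\mu$ of bounded support would then be obtained by approximating by atomic $\mu_n\to \mu$, constructing the corresponding $\spm_n$ explicitly, and passing to the limit via the coupling to obtain a limiting measure $\spm$ and the embedding $B_{T^\spm}\sim \mu$.

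Minimality of $T^\spm$ is immediate when $\supp(\mu)$ is bounded, since $B_{t\wedge T^\spm}$ is then a bounded martingale; the unbounded-support case follows by a truncation procedure identical in spirit to the end of the proof of Theorem~\ref{thm:main}. Identification of $\spm$ with the explicit formula \eqref{eq:m_measure_def} should reduce to verifying $g_\spm(x) = u_\mu(x) - |x - x_0|$ for the candidate measure, and this carries the intuitive content of the formula: the quantity $u_\mu(x) - |x-x_0|$ measures the extra Brownian oscillation near $x$ needed for $B_{T^\spm}$ to land with the correct $\mu$-probability at $x$. The principal obstacle will be the passage to the limit: the approximating measures $\spm_n$ need not be pointwise monotone in $n$, so one must bracket $\spm$ between comparable perturbations of $\spm_n$ to apply the coupling bounds, and extra care is needed near the boundary of $\supp(\mu)$ to preserve the martingale property and prevent mass from escaping to infinity.
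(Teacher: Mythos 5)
Your Part II is essentially the paper's argument, repackaged slightly: conditionally on $B$, $\Delta^\spm(R_t)$ is Poisson with mean $\Phi_t$, so $\Pr(T^\spm>t\mid B)=\me^{-\Phi_t}$ and $\td{T}^\spm=\Phi_{T^\spm}$ is Exp(1) with a conditional law not depending on $B$; this is correct and clean. One slip: $E=\Phi_{T^\spm}$ is \emph{not} ``a function of the independent PRM alone'' --- it depends on $B$ through $\Phi$ and $L$. Independence of $X$ follows instead from the fact that the conditional law of $E$ given $B$ is Exp(1) for every path (the paper makes the analogous point by computing $\Pr(\td{T}^m>t\mid \sigma(X_u:u\le t))$). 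Your occupation-density identity $\Pr(B_{T^\spm}\in\dd x\mid B)=\spm(\dd x)\int_0^\infty \me^{-\Phi_t}\,\dd L^x_t$ is also correct and is precisely the heuristic the paper uses at the very end of Section~5 to recover \eqref{eq:m_measure_def}.

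The genuine gap is in Part I, the existence of $\spm$. Calibrating $\spm$ against $\mu$ via $\spm(\dd x)\,g_\spm(x)=\mu(\dd x)$ is a nonlinear fixed-point problem, since $g_\spm$ depends on $\spm$ through $\Phi$; you solve it only for finitely atomic $\mu$ (where it reduces to a finite system, or to verifying \eqref{eq:m_measure_def} directly on a birth--death chain) and then appeal to a limit $\mu_n\to\mu$. But, as you yourself concede, the corresponding $\spm_n$ are not monotone in $n$, so your coupling (which only compares \emph{ordered} pairs of measures) gives you no control on $T^{\spm_n}$ along the sequence, no candidate limit measure, and no argument that $B_{T^{\spm_n}}\Rightarrow B_{T^{\spm}}$; ``bracketing $\spm$ between comparable perturbations'' is exactly the step that is missing, and it is not routine. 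The paper's proof avoids this entirely by a different mechanism: it introduces the set $S^\mu$ of measures whose stopped law is dominated by $\mu$, shows it is nonempty, proves the key Lemma~\ref{lem:max} that $S^\mu$ is stable under pairwise maxima (via a three-way decomposition of the PRM into $\Delta_\wedge,\Delta_1,\Delta_2$ and a coupling that does \emph{not} require the two measures to be ordered), takes the maximal element $\spm^{max}$, and derives $B_{T^{\spm^{max}}}\sim\mu$ by contradiction (adding a small amount of mass on a deficient set would keep you in $S^\mu$). The unbounded case is then handled by a truncation in which the maximal elements \emph{are} automatically decreasing, restoring monotonicity where you need it. Without Lemma~\ref{lem:max} or a worked-out substitute for the non-monotone limit, your existence argument does not close. (Your minimality claims are fine for bounded support; for unbounded support the paper, like you, defers to Proposition~18 of \cite{Cox:08}.)
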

\begin{remark} {\rm The above statement has two advantages in our
    opinion. Firstly, it provides an additional insight into the
    relation between stopping times for $X$ and $B$. Secondly, it can
    be proved using a fairly direct and elementary arguments.  We note
    however that it is poorer than Theorem \ref{thm:main} which not
    only gives existence of $\spm$ but also the explicit formula
    \eqref{eq:m_measure_def}. }
\end{remark}
\begin{proof}
We prove the first part of the theorem in two steps: in Step 1 we assume
 that $\mu$ has bounded support and in Step 2 we extend the result to
 arbitrary $\mu$ via a limiting proceedure. Finally we prove the second
  part of the theorem.\\
  Whenever no ambiguity is possible we suppress the superscripts
  $\spm$.\\
  \emph{Part I: Step 1}. We assume that $\mu$ has bounded support and
  denote by $[\ell_-,\ell_+]$ the smallest closed interval with
  $\mu([\ell_-,\ell_+])=1$. Define the set
  \begin{equation}\label{eq:setS}
    S^\mu=\left\{\spm: \forall\, \Gamma\subset (\ell_-,\ell_+),\,
      \Pr(B_{T^\spm}\in \Gamma)\le \mu (\Gamma)\right\}.
  \end{equation}
  We will now show that this set has non-trivial elements.  Fix
  $\epsilon>0$ and let $\spm_\epsilon=\epsilon \mu$ on $(\ell_-,\ell_+)$ and
  infinity elsewhere. We have then $T^{\spm_\epsilon}\le H_{\ell_-}\land
  H_{\ell_+}$, where $H_a=\inf\{t: B_t=a\}$, and thus $\E
  L^x_{T^{\spm_\epsilon}}\le 2\frac{\ell_+|\ell_-|}{\ell_+-\ell_-}$. Let $\Gamma\subset
  (\ell_-,\ell_+)$ and define:
  \begin{equation*}
    R_t^{\Gamma} = \{(u,x): L_t^x > u, x \in \Gamma\}.
  \end{equation*}
  Then we deduce:
  \begin{eqnarray}
    \Pr(B_{T^{\spm_\epsilon}}\in \Gamma) & \le &
    \Pr\Big(\Delta^{m_\epsilon}(R^\Gamma_{H_{\ell_-}\land
  H_{\ell_+}})\ge 1\Big) \\ 
    &\le& \E\big[ \Delta^{m_\epsilon}(R^\Gamma_{H_{\ell_-}\land
  H_{\ell_+}})\big] =
    \epsilon\int_\Gamma \E L^y_{H_{\ell_-}\land
  H_{\ell_+}} \, \mu(\dd y)\\
    &\le& \frac{2\epsilon \ell_+|\ell_-|}{\ell_+-\ell_-}\mu(\Gamma).
  \end{eqnarray}
 In consequence, for $\epsilon< (2\ell_+|\ell_-|/(\ell_+-\ell_-))^{-1}$, $\spm_\epsilon\in
  S^\mu$. We want to take the maximal element of $S^\mu$ and the
  following lemma describes the key property for our proof.
  \begin{lemma}
    \label{lem:max}
    Suppose that $\spm_1,\spm_2\in S^\mu$. Then the 
measure\footnote{See the proof of Lemma~\ref{lem:max} for a detailed 
definition of $\spm$.} $\spm= \max\{\spm_1,\spm_2\}$ is also
    an element of $S^\mu$.
  \end{lemma}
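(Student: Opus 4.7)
The plan is to couple the Poisson random measures defining $T^{\spm_1}$, $T^{\spm_2}$ and $T^\spm$, and then use the hypotheses $\spm_i\in S^\mu$ separately on complementary pieces of the state space cut out by a Radon--Nikodym comparison of $\spm_1$ and $\spm_2$.

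First I would give $\spm=\max\{\spm_1,\spm_2\}$ a precise meaning. Take a common dominating $\sigma$-finite measure $\nu$ (for instance $\nu=\spm_1+\spm_2$ on their common finite region, with the infinite parts on $I^c$ handled separately as in the decomposition $\Delta^\spm=\td\Delta+\Delta_\infty$ of Sec.~\ref{sec:our}) and write $\spm_i=f_i\,\dd\nu$. Set $A=\{x:f_1(x)\ge f_2(x)\}$ and define $\spm(\dd x)=\max(f_1(x),f_2(x))\,\dd\nu(x)$, so that $\spm|_A=\spm_1|_A$ and $\spm|_{A^c}=\spm_2|_{A^c}$. In particular $\rho_1:=\spm-\spm_1$ is concentrated on $A^c$ and $\rho_2:=\spm-\spm_2$ is concentrated on $A$.

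Because $\spm\ge\spm_1$ as measures, standard Poisson thinning lets me realise $\Delta^\spm$, $\Delta^{\spm_1}$ and an independent Poisson random measure $\Delta^{\rho_1}$ with intensity $\dd u\,\rho_1(\dd x)$ on a common probability space satisfying $\Delta^\spm=\Delta^{\spm_1}+\Delta^{\rho_1}$. The spatial projections of the points of $\Delta^{\rho_1}$ lie in $A^c$. Setting $\td T_1=\inf\{t:\Delta^{\rho_1}(R_t)\ge 1\}$, we have $T^\spm=T^{\spm_1}\wedge\td T_1$; on $\{T^\spm=T^{\spm_1}\}$ we have $B_{T^\spm}=B_{T^{\spm_1}}$, while on $\{T^\spm=\td T_1<T^{\spm_1}\}$ the stopping location $B_{T^\spm}$ lies in $A^c$. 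Hence for any Borel $\Gamma\subset(\ell_-,\ell_+)$,
\begin{equation*}
\Pr(B_{T^\spm}\in\Gamma\cap A)=\Pr(B_{T^{\spm_1}}\in\Gamma\cap A,\,T^{\spm_1}\le\td T_1)\le \Pr(B_{T^{\spm_1}}\in\Gamma\cap A)\le\mu(\Gamma\cap A),
\end{equation*}
the last inequality by $\spm_1\in S^\mu$. A symmetric coupling through $\Delta^\spm=\Delta^{\spm_2}+\Delta^{\rho_2}$ (with $\Delta^{\rho_2}$ supported on $A$) yields $\Pr(B_{T^\spm}\in\Gamma\cap A^c)\le\mu(\Gamma\cap A^c)$. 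Summing over the partition $\Gamma=(\Gamma\cap A)\cup(\Gamma\cap A^c)$ gives $\Pr(B_{T^\spm}\in\Gamma)\le\mu(\Gamma)$, establishing $\spm\in S^\mu$.

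The main obstacle I anticipate is purely the bookkeeping needed to define $\spm$ and execute the thinning when $\spm_1$ or $\spm_2$ is infinite outside its finite-mass interval $I_i$. This should dissolve by treating the $\Delta_\infty$ components separately: the infinite atoms sit on $I_i^c$ at local-time level $u=0$ and contribute only to exits from the interior, so the Radon--Nikodym partition is only required on the common finite region $I_1\cap I_2$, where it is unambiguous. Once this is in place the coupling above is essentially mechanical.
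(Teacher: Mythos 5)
Your proof is correct and follows essentially the same route as the paper: both realise $\Delta^\spm$ as a superposition of independent Poisson random measures and exploit the fact that the extra marks are placed off the set on which one is comparing the stopping distribution against $\mu$, so they can only make stopping there less likely. The only cosmetic difference is that you use a two-piece partition $A$, $A^c$ and compare against $\spm_1$, $\spm_2$ respectively, whereas the paper splits into three pieces $F_1$, $F_2$ and their complement and additionally invokes $\min\{\spm_1,\spm_2\}\in S^\mu$ on the third piece.
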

  The proof of the Lemma, perhaps the most interesting element of the
  proof of Theorem \ref{thm:main}, is postponed.  Using the Lemma we
  can conclude that there exists a maximal element $\spm^{max}\in
  S^\mu$. We claim that $B_{T^{\spm^{max}}}\sim \mu$. Suppose the
  contrary and let $\nu\sim B_{T^{\spm^{max}}}$. As $\spm^{\max}$ is
  an element of $S^\mu$, $\nu$ is dominated by $\mu$ on $(\ell_-,\ell_+)$ and
  with our assumption there exists $\Gamma \subset (\ell_-,\ell_+)$ such 
that
  $\nu(\Gamma)<\mu(\Gamma)$. Let $f$ be the Radon-Nikodym derivative
  of $\nu$ with respect to $\mu$ on $(\ell_-,\ell_+)$. Then there exists an
  $\epsilon>0$ and $\Gamma'\subseteq \Gamma$ such that $f<1-\epsilon$
  on $\Gamma'$ and $\mu(\Gamma')>0$. Let
  $\spm'=\spm^{max}+\gamma\mu\mathbf{1}_{\Gamma'}$ with
  $\gamma=\epsilon (\ell_+-\ell_-)/(4\ell_+|\ell_-|)$ and let $\rho\sim
  B_{T^{\spm'}}$. The measure $m'$ involves extra stopping in
  $\Gamma'$, when compared with $m$, so that necessarily there is less
  chance in stopping off $\Gamma'$. Hence, $\rho\le \nu\le \mu$ on
  $(\ell_-,\ell_+)\setminus \Gamma'$. Moreover, using arguments as above we see
  that $\rho\le \nu+\epsilon/2\le \mu$ on $\Gamma'$ and thus $\spm'\in
  S^\mu$ which contradicts maximality of $\spm^{max}$. Finally,
  $T^{\spm^{max}}$ is minimal since it is
  smaller than $H_{\ell_-}\land H_{\ell_+}$.\smallskip\\
  \emph{Part I: Step 2}. Consider $\mu$ any centered probability
  measure and write $\ell_-,\ell_+$ respectively for the lower and the upper
  bound of the support of $\mu$. We have just treated the case when
  both $\ell_-,\ell_+$ are finite so we suppose that at least one of them is
  infinite. Let $a(n),b(n)$ be two sequences with $a(n)\searrow \ell_-$,
  $b(n)\nearrow \ell_+$, as $n\to \infty$, such that the measure
  \[
  \mu_n=\mu((-\infty,a(n)])\delta_{a(n)}+\mu_{|(a(n),b(n))}+\mu([b(n),\infty))
  \delta_{b(n)}
  \]
  is centered. This measure can be embedded using $T^{\spm_n}$ where
  $\spm_n=\spm^{max}$ is the maximal element of $S^{\mu_n}$. Clearly
  $\spm_k\in S^{\mu_n}$ for all $k\ge n$ and thus $(\spm_n)$ is a
  decreasing sequence. It thus converges to a limit denoted
  $\spm=\inf_{n}\spm_n$, which is a measure (see e.g.\ Sec.~III.10 in
  \cite{Doob:94}).\\
  Let $\Delta$ and $\td{\Delta}_n$, $n\ge 1$, be independent Poisson
  measures with intensities respectively $\spm$ and
  $(\spm_n-\spm_{n+1})$, $n\ge 1$. Consider
  $\Delta_n=\Delta+\sum_{k\ge n}\td{\Delta}_k$ which is again a
  Poisson point measure with intensity $\spm+\sum_{k\ge
    n}(\spm_k-\spm_{k+1})=\spm_{n}$. With $T^{\spm_n}=\inf\{t:
  \Delta_n(R_T)\ge 1\}$ as previously, we have that
  $B_{T^{\spm_n}}\sim \mu_n$. Furthermore, as $\Delta_n(\Gamma)\ge
  \Delta_{n+1}(\Gamma)\ge \ldots$ we have that $T^{\spm_n}\le
  T^{\spm_{n+1}}\le \ldots$ so that $T^{\spm_n}\nearrow T=T^\spm$ as
  $n\to \infty$. To show that $T^\spm<\infty$ a.s. we recall that
  $|B_t|-L_t^0$ is a martingale. As $T^{\spm_n}\le H_{a(n),b(n)}$ we
  have that
  \[
  \E L^0_{T^{\spm_n}}=\E |B_{T^{\spm_n}}|\le \int_{-\infty}^\infty
  |x|\mu(\dd x).
  \]
  The left hand side converges to $\E L^0_{T^\spm}$ which is thus
  finite and in particular $T^\spm<\infty$ a.s. 
  Finally, since $\spm\in S^{\mu_n}$ for all $n$, the law of $B_{T^\spm}$ is dominated by
  $\mu$ on $\R$ and is thus simply equal to $\mu$.  The uniform
  integrability of $(B_{t\land T^\spm}:t\ge 0)$ follows from standard
  arguments (\eg{} Proposition~18 of \cite{Cox:08}).
  \smallskip\\
  \emph{Part II}. To show that $\td{T}^\spm$ is exponentially
  distributed, we recall the above definitions. Then for $t>s$:
  \begin{eqnarray*}
    \Pr\left(\td{T}^\spm \ge t | \td{T}^\spm \ge s\right) & = &
    \Pr\left(A^{-1}_{T^\spm} \ge t | A^{-1}_{T^\spm} \ge s \right)\\
    & = & \Pr\left(T^\spm \ge A_t | T^\spm \ge A_s\right)\\
    & = & \Pr\left(\Delta(R_{A_t-}) = 0 |  \Delta(R_{A_s-}) = 0\right)\\
    & = & \Pr\left(\Delta(R_{A_t-} \setminus R_{A_s-}) = 0\right) \\
    & = & \Pr\left(\Delta(R_{A_t} \setminus R_{A_s}) = 0\right)
  \end{eqnarray*}
  where for the last equality we use the fact that local times are
  continuous in $t$.  However, conditional on $B_t$, we know
  $\Delta(R_{A_t} \setminus R_{A_s})$ is Poisson with parameter
  \begin{equation*}
    \int_{-\infty}^\infty (L_{A_t}^x - L_{A_s}^x) \, \spm(\dd x)=\Phi_{A_t}-\Phi_{A_s}=t-s
  \end{equation*}
  where we used \eqref{eq:phi_inv_A}. Clearly $X_{\td{T}^m}=B_{T^m}$.
  Finally a similar calculation to the ones above shows that
  $\Pr(\td{T}^m>t|\sigma(X_u:u\le t))=\Pr(\td{T}^m>t)$ and $\td{T}^m$
  is independent of $X$. This completes the proof of Theorem
  \ref{thm:main2}.
\end{proof}

\begin{proof}[Proof of Lemma \ref{lem:max}]
  Note that the measure $\spm$ is well defined. More precisely let
  $\spm_3=\spm_1+\spm_2$ and $f_1$ and $f_2$ the Radon-Nikodym
  derivatives respectively of $\spm_1$ and $\spm_2$ with respect to
  $\spm_3$. The measure $\spm$ is defined via its Radon-Nikodym
  derivative $f=f_1\lor f_2$ with respect to $\spm_3$. Likewise, the
  measure $\underline{\spm}=\min\{\spm_1,\spm_2\}$ is well defined. We
  write $\nu^{\spm}$ for the law of $B_{T^{\spm} \wedge H_a\wedge
    H_b}$. Note that by construction
  $\nu^{\underline{\spm}}(\Gamma)\leq \nu^{\spm_1}(\Gamma)\leq
  \mu(\Gamma)$ and hence $\underline{\spm}\in S^\mu$.\\
  Consider the signed measure $(\spm_1-\spm_2)$ and let $F_1$ be the support
  of its positive part and $F_2$ the support of its negative part.
  Then we may decompose $\Delta$ associated with $\spm$ into three
  independent Poisson Random Measures, $\Delta_\wedge$ with intensity
  $\dd u\,\underline{\spm}(\dd x)$, $\Delta_1$ with intensity $\dd
  u\,(\spm_1(\dd x)-\spm_2(\dd x))\indic{F_1}(x)$ and $\Delta_2$ with
  intensity $\dd u\, (\spm_2(\dd x)-\spm_1(\dd x)) \indic{F_2}(x)$. We
  know that the stopping times generated by the measures
  $\Delta_\wedge + \Delta_1$ and $\Delta_\wedge + \Delta_2$ both lead
  to measures which are dominated by $\mu$ on $(a,b)$. We wish to
  deduce the same about $\Delta_\wedge + \Delta_1+ \Delta_2$.
  \begin{figure}[t]
    \begin{center}
      \includegraphics[width=3in,height=3in]{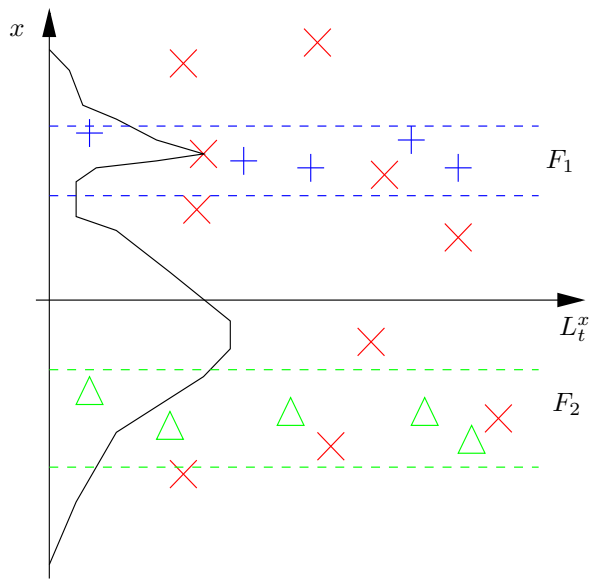}
      \caption{\label{fig:max} Representation of the Poisson Random
        Measure $\Delta$ in terms of $\Delta_{\wedge}, \Delta_1$ and
        $\Delta_2$, represented by $\times, +$ and $\triangle$
        respectively}
    \end{center}
  \end{figure}
  \\
  We show this by considering the coupling implied by
  Figure~\ref{fig:max}. Given a set $\Gamma\subset (a,b)$, we need to
  show that $\nu^\spm(\Gamma) \le \mu(\Gamma)$. However by considering
  $\Gamma \subseteq F_1$, it is clear that $\nu^\spm (\Gamma) \le
  \nu^{\spm_1}(\Gamma)$ since adding adding extra marks according to
  $\Delta_2$ can only reduce the probability of stopping in $F_1$, as it will not
  produce any new `points' in this set. Similarly, for $\Gamma
  \subseteq F_2$, we will have $\nu^\spm (\Gamma) \le \nu^{\spm_2}
  (\Gamma)$. Finally, for $\Gamma \subseteq (a,b) \setminus(F_1 \cup
  F_2)$ we have $\nu^\spm (\Gamma) \le \nu^{\underline{\spm}}
  (\Gamma)$. It now follows from $\spm_1,\spm_2,\underline{\spm}\in
  S^\mu$ that $\spm \in S^\mu$.
\end{proof}
We have thus proved existence of a suitable measure $m$ such that the
generalised diffusion $(X_t)$ associated to $m$ satisfies $X_T\sim
\mu$ for an independent exponential time $T$. We have also seen that
this is equivalent to finding $m$ such that $B_{T^m}\sim \mu$, where
$T^m$ is stopping time defined in \eqref{eqn:Tdefn}. We can use this
new interpretation to recover the formula \eqref{eq:m_measure_def} for
$m$. Indeed, from construction of $T^m$, intuitively we have
$$\Pr(B_{T^m}\in \dd x) \ = \ m(\dd x) \ \times \ \E\left(\textrm{time 
    spent in }\{x\}\textrm{ by }(B_t:t\leq T^m)\right).$$ The time
spent in $\{x\}$ by $(B_t:t\leq T^m)$ is simply $L^x_{T^m}$ and
$\E[L^x_{T^m}]=\E|B_{T^m}-x|-|x|$. Hence, if we are to have
$B_{T^m}\sim \mu$ we have to have
\[
\mu(\dd x)\ = \ m(\dd x)\ \times \ \left(\int |x-y|\mu(\dd
  y)-|x|\right)\ = \ m(\dd x)\ \times \ \left(u_\mu(x)-|x|\right),
\]
which is exactly \eqref{eq:m_measure_def}.

\bibliography{timehomogconstr} \bibliographystyle{elsart-harv}

\end{document}